\documentclass{amsart}

\author{Paul Pollack and Noah Snyder}

\usepackage{amssymb,microtype,mathscinet}
\usepackage[utf8]{inputenc}


\address{Department of Mathematics, University of Georgia, Athens, GA 30602}
\email{pollack@uga.edu}

\address{Department of Mathematics, Indiana University, Bloomington, IN 47405}
\email{nsnyder1@indiana.edu}

\DeclareMathAlphabet{\curly}{U}{rsfs}{m}{n}
\newtheorem{thm}{Theorem}

\newtheorem{lem}[thm]{Lemma}
\newtheorem*{claim}{Claim}

\theoremstyle{remark}
\newtheorem*{rmk}{Remark}

\def\O{\mathcal{O}}

\def\I{\mathcal{I}}
\def\Q{\mathbb{Q}}

\def\Z{\mathbb{Z}}
\def\F{\mathbb{F}}

\begin{document}
\title{A Quick Route to Unique Factorization in Quadratic Orders}
\markright{Notes}

\maketitle
\begin{abstract} We give a short proof --- not relying on ideal classes or the geometry of numbers --- of a known criterion for quadratic orders to possess unique factorization.
\end{abstract}

\section{Introduction.}
Let $D$ be a quadratic discriminant, meaning that $D$ is a nonsquare integer with $D\equiv 0, 1\pmod{4}$. Set $D = 4d+\sigma$, where $\sigma \in \{0,1\}$, and let $\tau = \frac{\sigma+\sqrt{D}}{2}$. It is easy to check that $\tau^2 \in \Z+\Z\tau$, so that 
\begin{align*} \Z[\tau] &= \Z+\Z\tau\\
&= \left\{\frac{u+v\sqrt{D}}{2}: u, v \in \Z, u\equiv vD\!\!\!\pmod{2}\right\}. \end{align*}
In what follows, we write $\O_D$ (for ``order of discriminant $D$'') in place of $\Z[\tau]$. 

Our aim with this note is to showcase a simple proof of the following criterion for unique factorization in $\O_D$. We remind the reader that if $R$ is a domain then $\pi\in R$ is \emph{irreducible} if $\pi$ is nonzero and not a unit, and if whenever $\pi=\alpha\beta$ with $\alpha,\beta \in R$, either $\alpha$ or $\beta$ is a unit. The element $\pi\in R$ is \emph{prime} if $\pi$ is nonzero and not a unit, and if whenever $\pi\mid \alpha\beta$ (with $\alpha,\beta \in R$) either $\pi\mid\alpha$ or $\pi\mid \beta$; equivalently, a prime is a nonzero element of $R$ for which the principal ideal $(\pi)$ is a prime ideal of $R$. Prime elements are always irreducible; the converse holds in a UFD (unique factorization domain), but not in general.

\begin{thm}\label{thm:main} Suppose that every rational prime number
\begin{equation}\label{eq:pbound} p \le \begin{cases} \sqrt{|D|/3} &\text{if $D < 0$,} \\
\sqrt{D/5} &\text{if $D > 0$}
\end{cases} \end{equation}
that is irreducible in $\O_D$ is also prime in $\O_D$. Then $\O_D$ is a unique factorization domain.
\end{thm}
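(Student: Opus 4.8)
The plan is to establish the contrapositive: if $\O_D$ is not a UFD, then some rational prime $p$ obeying the bound \eqref{eq:pbound} is irreducible but not prime in $\O_D$. To begin, note that $\O_D$ is finitely generated as a $\Z$-module and hence Noetherian, so it is atomic: every nonzero nonunit is a product of irreducibles. Therefore $\O_D$ is a UFD exactly when every irreducible is prime, and a failure of unique factorization hands us an irreducible $\pi$ that is not prime.

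First I would localize this failure at a rational prime. Writing $\N$ for the norm, I will use the elementary facts that $\N$ is multiplicative, that $\gamma$ is a unit iff $\N(\gamma) = \pm 1$, that $\pi \mid \N(\pi) = \pi\bar\pi$, and that any element of prime norm is irreducible. Picking a rational prime $p \mid \N(\pi)$, the ideal $\mathfrak p = (\pi, p)$ is a proper nonzero ideal above $p$; for $p$ not dividing the conductor of $\O_D$ one checks that $\O_D/\mathfrak p$ is a field of order $p$, so $\mathfrak p$ is a prime ideal of norm $p$. The assertion that $\pi$ is not prime translates into the statement that this obstruction is not visible at the level of single elements: $\mathfrak p$ is nonprincipal, equivalently $\O_D$ has no element of norm $\pm p$. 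Fixing a $\Z$-basis of $\mathfrak p$ and dividing the restricted norm form by $\N(\mathfrak p) = p$ yields a primitive integral binary quadratic form $f$ of discriminant $D$ which represents $\pm 1$ if and only if $\mathfrak p$ is principal; so in our case $f$ does not represent $\pm 1$.

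The engine of the proof is then an elementary reduction of $f$ --- Gauss reduction when $D < 0$, continued-fraction reduction when $D > 0$ --- which invokes neither ideal classes nor Minkowski's convex-body theorem. For $D < 0$ a reduced form $a x^2 + bxy + cy^2$ satisfies $|b| \le a \le c$, so $|D| = 4ac - b^2 \ge 3a^2$ and the least positive value it represents is $a \le \sqrt{|D|/3}$. For $D > 0$ the corresponding reduction of an indefinite form produces a nonzero represented value of absolute value at most $\sqrt{D/5}$, the extremal constant $\sqrt{5}$ being the classical Lagrange value attained by $x^2 - xy - y^2$. Since reduction proceeds by unimodular changes of variables, it preserves the set of represented integers; thus the reduced form still fails to represent $\pm 1$ while representing some nonzero $a$ with $|a|$ bounded as in \eqref{eq:pbound}.

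It remains to descend from this small value $a$ to a small rational prime witnessing both properties. Factoring the norm-$|a|$ ideal corresponding to $a$ and using that it cannot be principal, I would isolate a nonprincipal prime ideal $\mathfrak q$ of prime norm $q \le |a|$: then $\O_D$ has no element of norm $\pm q$, so $q$ is irreducible, while $(q) = \mathfrak q\,\bar{\mathfrak q}$ (or $\mathfrak q^2$) is a proper factorization of ideals, so $q$ is not prime. As $q$ obeys \eqref{eq:pbound}, this contradicts the hypothesis. I expect the main obstacle to lie precisely in this last descent: arranging the extracted value to be prime (or extracting a prime factor that still simultaneously witnesses irreducibility and non-primality), and handling the primes dividing the conductor --- where $\O_D$ is not Dedekind and ideal factorization above $q$ behaves irregularly --- uniformly with the remaining primes, all while keeping the argument phrased through norms and divisibility rather than the ideal class group.
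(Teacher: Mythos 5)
Your outline reproduces, in the language of binary quadratic forms, the classical class-group/Minkowski-style argument, and it has two genuine gaps that keep it from being a proof. First, the bound for $D>0$ is asserted, not proved: naive continued-fraction reduction of an indefinite form $ax^2+bxy+cy^2$ of discriminant $D$ only yields represented values of size roughly $\sqrt{D}$ (a reduced form has $0<b<\sqrt{D}$ and $|ac|=(D-b^2)/4$, giving $\min_i |a_i| < \sqrt{D}/2$ over the cycle); sharpening the constant to $\sqrt{D/5}$ is the Korkine--Zolotarev/Markov theorem (the first point of the Lagrange spectrum), whose standard proofs run through Hurwitz's theorem on rational approximation --- exactly the Diophantine-approximation machinery the theorem's elementary proof is supposed to avoid. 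Citing ``the classical Lagrange value'' is naming the theorem, not proving it. Second, the descent step you flag yourself is a real obstruction, not a loose end: for non-maximal orders the dictionary between forms and ideals covers only \emph{invertible} ideals, ideals touching the conductor do not factor into primes, and even your opening move can fail. Concretely, take $D=-12$, so $\O_D=\Z[\sqrt{-3}]$ with conductor $2$: the element $\pi=2$ is irreducible (no element has norm $2$) but not prime (it divides $(1+\sqrt{-3})(1-\sqrt{-3})=4$ without dividing either factor), and the \emph{only} rational prime dividing $\N(\pi)=4$ is $p=2$, which divides the conductor, so the ideal $(\pi,p)=(2)$ has residue ring $\F_2[X]/(X^2+1)$ of order $4$, not a field of order $p$, and no form of discriminant $D$ is attached to it. Your proposal has no mechanism for producing a small witness prime in such cases, and patching this requires essentially redeveloping the theory of orders.

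By contrast, the paper's proof needs none of this apparatus. It proves one lemma ($\N(\alpha)=\pm p$ implies $\alpha$ is prime, via counting $\#\O_D/(\alpha)$), then argues by minimal counterexample on the \emph{claim} that every rational prime factors into primes of $\O_D$: if $p$ is the least prime failing this, the hypothesis forces $p$ to exceed the bound \eqref{eq:pbound}; one then picks a root $x$ of the minimal polynomial $m(X)$ of $\tau$ modulo $p$, normalized into a half-interval (upper half for $D>0$, lower half for $D<0$) so that $|m(x)|<p^2$ follows from a two-line inequality using \eqref{eq:plower}; writing $m(x)=pr$ with $|r|<p$, minimality factors $r$ into primes, and successively dividing $(x-\tau)(x-\bar{\tau})=\pm p\eta_1\cdots\eta_\ell$ by the $\eta_i$ leaves a factorization $p=\alpha\beta$ into nonunits of norm $\pm p$, contradicting minimality via the lemma. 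That argument treats all discriminants and all orders (conductor included) uniformly and gets the constants $\sqrt{3}$ and $\sqrt{5}$ from the half-interval choice alone --- precisely the two points where your route stalls.
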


\subsection{Examples.}
\begin{enumerate}
\item[(i)] [$D=73$] Since $\sqrt{73/5} = 3.8\dots$, the conditions of Theorem \ref{thm:main} concern only the primes $p=2$ and $p=3$. Neither 2 nor 3 is irreducible, since
    \[ 2 = \frac{9+\sqrt{73}}{2} \cdot \frac{9-\sqrt{73}}{2}, \quad \text{while} \quad 3 = (2\sqrt{73}+17)\cdot (2\sqrt{73}-17). \]
    (It is easy to check that all of the factors listed here are nonunits.)
    We conclude that $\O_D=\Z[\frac{1+\sqrt{-73}}{2}]$ is a UFD.

    The number $73$ is not particularly special.\footnote{See \cite{PS19} for a counterpoint to this claim.} It is widely believed that there are infinitely many $D>0$ for which $\O_D$ is a UFD. In fact, Cohen and Lenstra have precise conjectures predicting, for instance, that $\O_p$ is a UFD for 75.44\dots\% of primes $p\equiv 1\pmod{4}$ (see \cite[\S5.10]{cohen93} and \cite{CL84b,CL84a,tRW03}).
\item[(ii)] [$D=-163$] Since $\sqrt{163/3} = 7.3\dots$, we must check $p=2, 3, 5, 7$. As $\tau=\frac{1+\sqrt{-163}}{2}$ is a root of the monic irreducible polynomial $X^2-X+41$, we have that $\Z[\tau] \cong\Z[x]/(X^2-X+41)$. Hence, for each prime $p$,
    \[ \Z[\tau]/(p)  \cong (\Z[X]/(p))/(X^2-X+41) \cong \F_p[x]/(X^2-X+41). \]
     It is straightforward to check that $X^2-X+41$ is irreducible modulo $p$ for each  of $p=2,3,5,7$. (For the odd primes $p$ in this list, it suffices to observe that the discriminant $-163$ of $X^2-X+41$ is a nonsquare mod $p$.) Therefore, $\Z[\tau]/(p)$ is a field, whence $(p)$ is a prime ideal of $\O_D$ and $p$ is a prime element. So  the criterion of Theorem \ref{thm:main} is again satisfied and  $\O_D$ is a UFD. The number $-163$ \emph{is} special; as shown by Heegner, it is the largest (in absolute value) negative $D$ for which $\O_D$ is a UFD (\cite{heegner52}; see also \cite{cox13}).
 \end{enumerate}

We do not claim that Theorem \ref{thm:main} is new. When $\O_D$ is the full collection of algebraic integers inside $\Q(\sqrt{D})$ (the so-called ``maximal order''), basic algebraic number theory says that $\O_D$ is a Dedekind domain with finite class group. Furthermore, results from the geometry of numbers imply that every ideal class is represented by an ideal with norm  bounded by the quantities appearing on the right of \eqref{eq:pbound} (see \cite[Theorem 13.7.10, p.\ 399]{artin11} for $D < 0$ and \cite[Exercise 17, p.\ 300]{cohen93} for $D > 0$). So Theorem \ref{thm:main} follows easily (in this case).

It seems of some interest --- e.g., for the teaching of basic courses in algebra and number theory --- to give a proof of Theorem \ref{thm:main} requiring as little machinery as possible. Several close relatives of Theorem \ref{thm:main} have been proved in the literature without reference to algebraic number theory; see \cite{fendel85,gyarmati83,lanczi65,
popovici57b,popovici57,ramirez16,snyder07,zaupper83,zaupper90}. However, all of these papers either establish results weaker or less complete than  Theorem \ref{thm:main}, or their proofs depend on auxiliary results from the geometry of numbers or the theory of Diophantine approximation. (For example, the beautifully simple method of Ramirez V.\ in \cite{ramirez16} gives a very satisfactory result when $D<0$, but only a partial result for $D>0$.)  Apart from a few easy lemmas concerning the ``norm'' map (see the Notation section below), our proof of Theorem \ref{thm:main} is self-contained, resting only on the commutative ring theory seen in a first graduate algebra course. 

%
%

\subsection*{Notation.} We let $K$ be the fraction field of $\O_D$, so that $K = \Q(\sqrt{D})$, and we denote conjugation in $K$ with a bar. The \emph{norm} of $\alpha\in K$, denoted $N(\alpha)$, is defined by $N(\alpha) = \alpha\bar{\alpha}$. We recall that $N(\alpha\beta) = N(\alpha)N(\beta)$ for all $\alpha,\beta \in K$, that the norm sends nonzero elements of $\O_D$ to nonzero integers, and that $\alpha$ is a unit of $\O_D$ if and only if $N(\alpha)=\pm 1$. Readers are invited to prove these  results themselves; alternatively, they may consult, e.g., \cite[Chapter 2]{lehman19}.

\section{Proof of Theorem \ref{thm:main}.}

Our proof makes crucial use of the following lemma, which also features in the arguments of \cite{ gyarmati83,popovici57,ramirez16,snyder07,zaupper83,zaupper90}.

\begin{lem}\label{lem:primenorm} Let $\alpha \in \O_D$. If $N(\alpha)=\pm p$, where $p$ is a rational prime, then $\alpha$ is prime in $\O_D$.
\end{lem}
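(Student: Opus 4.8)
The plan is to show directly that $\alpha$ is prime by verifying the defining divisibility property, rather than by passing through irreducibility. Suppose $\alpha \mid \beta\gamma$ with $\beta,\gamma \in \O_D$; I want to conclude that $\alpha \mid \beta$ or $\alpha \mid \gamma$. The natural tool is the multiplicativity of the norm together with the hypothesis $N(\alpha) = \pm p$. Taking norms of the factorization $\beta\gamma = \alpha\delta$ (for some $\delta \in \O_D$) gives $N(\beta)N(\gamma) = N(\alpha)N(\delta) = \pm p\, N(\delta)$, so $p$ divides the integer $N(\beta)N(\gamma)$. Since $p$ is a rational prime, $p \mid N(\beta)$ or $p \mid N(\gamma)$; without loss of generality say $p \mid N(\beta)$.

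The heart of the argument is then to promote the divisibility $p \mid N(\beta)$ in $\Z$ to the divisibility $\alpha \mid \beta$ in $\O_D$. First I would observe that $N(\beta) = \beta\bar\beta$, so $p \mid \beta\bar\beta$ in $\Z$, hence also in $\O_D$. The key move is to compare this with the factorization of $p$ coming from the norm of $\alpha$: since $N(\alpha) = \alpha\bar\alpha = \pm p$, we have $\alpha\bar\alpha \mid \beta\bar\beta$ up to sign in $\O_D$. I expect the cleanest route is to show that $\alpha$ must divide one of the two conjugate factors $\beta$ or $\bar\beta$, and then to translate a conclusion about $\bar\beta$ back into one about $\beta$ by applying conjugation (which is an automorphism of $\O_D$). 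The main obstacle is precisely this step: divisibility of the product $\beta\bar\beta$ by $\alpha\bar\alpha$ does not a priori force $\alpha$ to divide a single factor unless we can separate $\alpha$ from $\bar\alpha$.

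To handle this, I would argue by a norm-size or gcd argument. Consider the ideal-free statement: I want to rule out the possibility that $\alpha$ divides neither $\beta$ nor $\gamma$. A clean way is to count: if $\alpha \nmid \beta$, then I would like to show $\alpha \mid \gamma$ directly. The most elementary approach I foresee is to use that $\O_D / (\alpha)$ has exactly $|N(\alpha)| = p$ elements, so it is a ring of prime order $p$, hence isomorphic as an abelian group to $\Z/p\Z$ and in fact a field $\F_p$ (a finite commutative ring with $p$ elements and no zero divisors problem to check). If $\O_D/(\alpha)$ is a field, then $(\alpha)$ is a maximal, hence prime, ideal, and $\alpha$ is prime by definition. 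So the real engine of the proof is the computation that $\#\big(\O_D/(\alpha)\big) = |N(\alpha)|$, which reduces the lemma to the fact that a finite commutative ring with a prime number of elements and a multiplicative identity is a field.

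Thus the key steps, in order, are: (1) establish that for nonzero $\alpha \in \O_D$ the quotient $\O_D/(\alpha)$ is finite of cardinality $|N(\alpha)|$; (2) specialize to $|N(\alpha)| = p$ to get a ring of prime order; (3) argue that such a ring, being a finite integral-domain-like quotient of prime cardinality, is a field, so $(\alpha)$ is prime. The step I expect to be the main obstacle — and the one deserving the most care — is (1), the index computation $\#(\O_D/(\alpha)) = |N(\alpha)|$, since it requires relating the multiplicative norm to the additive index of a sublattice; I would prove it by writing the multiplication-by-$\alpha$ map on the rank-two free $\Z$-module $\O_D = \Z + \Z\tau$ as an integer matrix whose determinant is $\pm N(\alpha)$, so that the index of its image equals $|N(\alpha)|$.
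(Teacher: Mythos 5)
Your proof is correct, but it runs on a different engine from the paper's. Both arguments converge on the same endpoint --- $\O_D/(\alpha)$ is a field with $p$ elements, so $(\alpha)$ is maximal and hence prime --- but you obtain the cardinality count from the general index formula $\#\bigl(\O_D/(\alpha)\bigr)=|N(\alpha)|$, proved by writing multiplication-by-$\alpha$ as an integer matrix on the free $\Z$-module $\Z+\Z\tau$ (its determinant is exactly $N(\alpha)$) and invoking the fact that an injective endomorphism of $\Z^2$ has image of index equal to the absolute value of its determinant. The paper sidesteps that lattice-index machinery with a squeeze: since $\pm p = \alpha\bar{\alpha}$, the ideal $(p)$ lies inside $(\alpha)$, giving a surjection $\O_D/(p)\twoheadrightarrow \O_D/(\alpha)$ whose kernel is nontrivial because $\bar{\alpha}$ is a nonunit; as $\#\O_D/(p)=p^2$ (the elements $a+b\tau$ with $0\le a,b<p$ form a complete residue system) and $\#\O_D/(\alpha)>1$ because $\alpha$ is a nonunit, Lagrange forces $\#\O_D/(\alpha)=p$. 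Your route buys a stronger, reusable statement --- the index--norm formula for arbitrary nonzero $\alpha$ --- at the cost of Smith-normal-form-type input, which sits slightly above the paper's deliberately minimal toolkit; the paper's route is tailored to $N(\alpha)=\pm p$ and needs only residue counting mod $p$. Two small points to tighten: your opening attempt to promote $p\mid N(\beta)$ to $\alpha\mid\beta$ is genuinely a dead end (the divisor may be $\bar{\alpha}$ rather than $\alpha$), and you were right to abandon it, so it should be cut rather than kept as scaffolding; and in your step (3), no zero-divisor check is needed --- in a unital ring with $p$ elements, $1$ has additive order $p$, so the subring $\Z\cdot 1\cong \Z/p\Z$ is already the whole ring, which is therefore $\F_p$.
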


\begin{proof} 


Since $\alpha \bar{\alpha}= \pm p$, there is a canonical surjection $\O_D/(p) \twoheadrightarrow \O_D/(\alpha)$. Since $\bar{\alpha}$ is not a unit, the  corresponding kernel is nontrivial (containing, e.g., $\alpha\bmod{p}$). Thus, $\#\O_D/(\alpha)$ is a proper divisor of $\#\O_D/(p) = p^2$. (The last equality comes from noting that $a+b\tau$, for $0\le a, b < p$, form a complete residue system mod $p$.) Since $\alpha$ is not a unit, $\#\O_D/(\alpha) > 1$. Therefore, $\#\O_D/(\alpha)=p$, and so $\O_D/(\alpha) \cong \F_p$. Hence, $(\alpha)$ is a prime (in fact, maximal) ideal of $\O_D$, so that $\alpha$ is prime in $\O_D$.
\end{proof}

We turn now to the proof of Theorem \ref{thm:main}. A simple induction on $|N(\alpha)|$ shows that every nonzero, nonunit $\alpha \in \O_D$ has a factorization into irreducibles. So it remains only to prove uniqueness. We  reduce this (as in \cite{popovici57, snyder07,zaupper90}) to  the following claim.

\begin{claim} Every prime in $\Z$ factors as a product of primes in $\O_D$.
\end{claim}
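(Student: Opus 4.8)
The plan is to prove the claim by strong induction on the rational prime $p$: assuming every prime smaller than $p$ factors into primes of $\O_D$, I show the same for $p$. Most primes are handled immediately. If $p$ is reducible in $\O_D$, write $p = \alpha\beta$ with $\alpha,\beta$ nonunits; taking norms gives $|N(\alpha)|\,|N(\beta)| = N(p) = p^2$ with both factors exceeding $1$, forcing $|N(\alpha)| = |N(\beta)| = p$, so Lemma~\ref{lem:primenorm} shows both $\alpha$ and $\beta$ are prime and exhibits $p = \alpha\beta$ as a product of two primes. If instead $p$ is irreducible \emph{and} prime, it is trivially a product of one prime. Thus the only troublesome case is that $p$ is irreducible but not prime; and since the hypothesis of Theorem~\ref{thm:main} says every rational prime $\le B$ (where $B$ denotes the bound on the right of \eqref{eq:pbound}) that is irreducible in $\O_D$ is prime, this case forces $p > B$.

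So suppose $p > B$ is irreducible but not prime. Since $(p)$ is not a prime ideal, $\O_D/(p) \cong \F_p[X]/(f)$ with $f(X) = X^2 - \sigma X - d$ (the minimal polynomial of $\tau$, of discriminant $D$) fails to be a domain, so $f$ has a root $r$ modulo $p$. I would then examine the sublattice $\Z p + \Z(\tau - r)$: a direct computation gives $N\bigl(mp + n(\tau - r)\bigr) = p\,Q(m,n)$, where $Q(m,n) = p m^2 + (\sigma - 2r)mn + kn^2$ with $k = (r^2 - \sigma r - d)/p \in \Z$ is an integral binary quadratic form whose discriminant works out to be exactly $D$. The next step is to produce a nonzero lattice vector of small norm by invoking the classical minimum bound for binary quadratic forms of discriminant $D$: there is $(m,n)\neq(0,0)$ with $0 < |Q(m,n)| \le B$, the value being nonzero since $D$ is a nonsquare and hence $Q$ represents $0$ only trivially. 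This yields a nonzero $\alpha \in \O_D$ with $N(\alpha) = pc$, where $c = Q(m,n)$ satisfies $0 < |c| \le B < p$; in particular $p \nmid c$.

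I would finish by descent on $|c|$. As long as $|c| > 1$, choose a rational prime $\ell \mid c$; then $\ell < p$, so by the induction hypothesis $\ell$ factors into $\O_D$-primes. Some such prime $\pi$ divides $\ell$, hence $\pi \mid \alpha\bar\alpha$, and as $\pi$ is prime it divides $\alpha$ or $\bar\alpha$; after conjugating if necessary I may assume $\pi \mid \alpha$. (In the inert case $\pi = \ell$, conjugation symmetry forces $\ell \mid \alpha$ and $\ell \mid \bar\alpha$, so $\ell^2 \mid c$ and one divides by $\ell$ instead.) Replacing $\alpha$ by $\alpha/\pi$ (resp.\ $\alpha/\ell$) produces a nonzero element whose norm is again $p$ times an integer, but with the cofactor strictly smaller in absolute value. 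Iterating drives $|c|$ down to $1$, leaving a nonzero $\alpha$ with $N(\alpha) = \pm p$. By Lemma~\ref{lem:primenorm}, $\alpha$ is prime; since conjugation is a ring automorphism of $\O_D$, so is $\bar\alpha$, and then $p = \pm\alpha\bar\alpha$ is a product of two nonunits, contradicting the irreducibility of $p$. Hence the troublesome case never arises, and in every case $p$ factors into $\O_D$-primes, completing the induction.

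The step I expect to be the main obstacle is locating the small-norm lattice vector, that is, the bound $\min_{(m,n)\neq 0}|Q| \le \sqrt{|D|/3}$ for $D<0$ and $\le \sqrt{D/5}$ for $D>0$ — precisely where the constants of \eqref{eq:pbound} enter. For $D<0$ the form is definite and the estimate falls out of Gauss reduction (a reduced form $ax^2+bxy+cy^2$ with $|b|\le a\le c$ satisfies $|D| = 4ac - b^2 \ge 3a^2$, so the represented value $a$ is at most $\sqrt{|D|/3}$). The indefinite case $D>0$ is the genuinely delicate one, and the challenge is to supply a self-contained reduction argument for it that stays within elementary ring theory, without appealing to the geometry of numbers or Diophantine approximation.
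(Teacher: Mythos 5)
Your overall architecture --- induct on $p$; reduce to the case of a $p$ exceeding the bound $B$ in \eqref{eq:pbound} that is irreducible but not prime; extract a root $r$ of the minimal polynomial $m(X)$ of $\tau$ modulo $p$; manufacture $\alpha \in \O_D$ with $N(\alpha) = pc$ for a small cofactor $c$; and then strip $\O_D$-primes out of $c$ using the induction hypothesis until Lemma~\ref{lem:primenorm} applies --- is exactly the skeleton of the paper's argument, and your norm computation for $Q$, your descent (including the careful treatment of the inert case, where you divide by $\ell$ and remove $\ell^2$ from the cofactor), and the closing contradiction with irreducibility are all sound. But the step you flag as the ``main obstacle'' is a genuine gap, not a technicality. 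For $D>0$, the assertion that an indefinite form of discriminant $D$ attains a nonzero value of absolute value at most $\sqrt{D/5}$ is a real theorem (Korkine--Zolotareff/Markov; it is the form-theoretic avatar of Hurwitz's $\sqrt{5}$), and its standard proofs run through the reduction theory of indefinite forms or continued fractions --- precisely the Diophantine-approximation machinery the paper is designed to avoid. So as written, your proof is complete only for $D<0$ (where your Gauss-reduction sketch does close the argument); for $D>0$ it rests on an unproved lemma at the one point where the constant $\sqrt{D/5}$ must enter.

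The gap is avoidable, and seeing why is instructive: your descent never needs $|c| \le B$; it needs only $0 < |c| < p$, so that $c$ is prime to $p$ and every prime divisor of $c$ is smaller than $p$, hence covered by the induction hypothesis. That much weaker demand can be met with no two-variable minimum theorem at all. Restrict attention to the vectors in your lattice with $n = 1$, i.e., to the values $m(x)$ for $x \equiv r \pmod{p}$, and choose the representative $x$ in the half-period $\frac{p+\sigma}{2} \le x \le p$ when $D > 0$ (resp.\ $\sigma \le x \le \frac{p+\sigma}{2}$ when $D < 0$). This is what the paper does: writing $m(x) = \left(x - \frac{\sigma}{2}\right)^2 - \frac{D}{4}$, the upper range gives $m(x) \le p^2 - \frac{D}{4} < p^2$ and $m(x) \ge \frac{p^2 - D}{4} > -p^2$, the last inequality holding precisely because $p > \sqrt{D/5}$; hence $|m(x)| < p^2$, and $c = m(x)/p$ satisfies $0 < |c| < p$ (nonvanishing because $D$ is a nonsquare, as you observed). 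In other words, the constant $\sqrt{D/5}$ should enter not through the minimum of the indefinite form but through a one-line estimate on a single well-chosen value of $m$; substituting this choice of representative for your appeal to the form-minimum theorem converts your proposal into, essentially, the paper's proof.
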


To see why this suffices, recall that an element with a factorization into primes necessarily has this as its only factorization into irreducibles (up to order and unit factors). This is clear from the usual proof of unique factorization in  a Euclidean domain or PID (compare with the proof of  Proposition 12.2.14(a) in \cite{artin11}). Since every rational integer larger than $1$ factors as a product of rational primes, our claim implies that all those integers factor uniquely in $\O_D$. But this implies that every $\alpha \in \O_D$, not zero and not a unit, also factors uniquely: If $\alpha$ had two factorizations, we could cook up two factorizations of $|N\alpha| = \pm \alpha\bar{\alpha}$ by concatenating our factorizations of $\alpha$ with a fixed factorization of $\pm \bar{\alpha}$.


\begin{proof}[Proof of the claim]  Assuming the claim to be false, let $p$ be the smallest prime for which it fails. Then 
\begin{equation}\label{eq:plower} p >
\begin{cases} \sqrt{|D|/3} &\text{when $D<0$}, \\
\sqrt{D/5} &\text{when $D>0$}.
\end{cases}
\end{equation}
Indeed, suppose otherwise. Since $p$ does not factor as a product of primes, it itself is not prime. But then the hypothesis of Theorem \ref{thm:main} tells us that $p$ factors nontrivially in $\O_D$. Write $p=\pi_1\cdots\pi_k$, with $k\ge2$ and all the $\pi_i$ irreducible. Taking norms, $p^2=N(\pi_1)\cdots N(\pi_k)$, and so $k=2$ and $N(\pi_1) = N(\pi_2) = \pm p$. By Lemma \ref{lem:primenorm}, both $\pi_1$ and $\pi_2$ are prime, and so $p$ factors into primes after all, an absurdity.

Let \[ m(X) = X^2 - \sigma X + \frac{\sigma-D}{4} \in \Z[X] \]
be the minimal polynomial of $\tau$. Then $\Z[\tau]\cong \Z[X]/(m(X))$
 and $\Z[\tau]/(p) \cong \F_p[X]/(m(X))$. Since $p$ is not prime in $\O_D$, the quotient ring $\Z[\tau]/(p)$ is not a field, and so $m(X)$ factors nontrivially over $\F_p$. Thus, for some integers $x$ and $x'$,
 \begin{equation}\label{eq:mfact} m(X) \equiv (X-x) (X-x') \pmod{p}. \end{equation}
 Comparing coefficients of $X$ on both sides, we find that $x+x' \equiv \sigma\pmod{p}$, and so we can assume that
 \[ \frac{p+\sigma}{2} \le x\le p \text{\quad if $D>0$}, \qquad\text{and}\qquad \sigma\le x \le \frac{p+\sigma}{2} \text{\quad if $D<0$}. \]
By \eqref{eq:mfact}, $m(x) \equiv 0\pmod{p}$. Moreover, our inequalities for  $x$ guarantee that
\[ |m(x)| < p^2. \]
Indeed, if $D>0$, then (keeping in mind \eqref{eq:plower})
\[ p^2 > m(x) = \left(x-\frac{\sigma}{2}\right)^2 - \frac{D}{4} \ge \frac{p^2-D}{4} > -p^2, \]
while if $D < 0$, then
\[ 0 < m(x) = \left(x-\frac{\sigma}{2}\right)^2 + \frac{|D|}{4} \le \frac{p^2+|D|}{4} < p^2. \]
Write $m(x)=pr$, where $|r| < p$. By the minimality of $p$, every prime dividing $r$ factors into primes of $\O_D$, and so $r$ itself factors, up to sign, as a product of primes of $\O_D$. Thus, for some primes $\eta_1,\dots,\eta_\ell$ of $\O_D$,
\[ (x-\tau)(x-\bar{\tau}) = m(x) = \pm p \eta_1\cdots\eta_\ell. \]
Since $\eta_1$ is prime, $\eta_1$ divides either $x-\tau$ or $x-\bar{\tau}$. Divide both sides of the equation by $\eta_1$ and continue the process with $\eta_2$. Eventually we are led to a factorization of the form
\[ \frac{x-\tau}{\Pi_1} \cdot \frac{x-\bar{\tau}}{\Pi_2} = \pm p, \qquad\text{where}\qquad
\Pi_1 = \prod_{i \in \I} \eta_i,\quad \Pi_2 = \prod_{i \in \I^{c}} \eta_i
\]
 for some $\I \subset \{1,2,\dots,k\}$, where $\I^{c} = \{1,2,\dots,k\}\setminus \I$. Multiplying by $\pm 1$ if necessary, we obtain a factorization of $p$ as $\alpha\beta$, say. If $\alpha$ or $\beta$ is a unit, then the other is a unit multiple of $p$. But that implies  $p\mid x-\tau$ or $p\mid x-\bar{\tau}$, which is absurd. (Both $\{1,\tau\}$ and $\{1,\bar{\tau}\}$ are $\Z$-module bases of $\O_D$, and so when a multiple of $p$ is written as $a+b\tau$ or $a+b\bar{\tau}$, both $a$ and $b$ must be multiples of $p$.) So $\alpha,\beta$ are nonunits. Now taking norms shows that $N\alpha = N\beta=\pm p$, so that $\alpha,\beta$ are prime by Lemma \ref{lem:primenorm}. Thus, $p$ has a factorization into primes of $\O_D$ after all, contradicting the choice of $p$. 
\end{proof}

\begin{rmk} In 1912/1913, Frobenius \cite{frobenius12} and Rabinowitsch \cite{rabinowitsch13} (independently) published the following striking result: For each integer $q \ge 2$, 
\begin{multline*} x^2-x+q~\text{ is prime for all integers $0 < x < q$}\\\text{if and only if}\quad \text{$\Z[\tfrac{1}{2}(1+\sqrt{1-4q})]$ is a UFD}; \end{multline*} 
see \cite[Chapter 11]{pollack17} for an exposition. For example, since $\Z[\frac{1}{2}(1+\sqrt{-163})]$ is a UFD, the polynomial $x^2-x+41$ assumes prime values for $x=1,2,\dots,40$. The  ``only if'' half of the proof is the more difficult of the two, and for this most modern treatments fall back on the theory of the class group. Theorem \ref{thm:main} allows one to fashion a completely elementary proof (apply Theorem \ref{thm:main} in place of Proposition 11.13 in \cite{pollack17}; alternatively, Ramirez V.'s Theorem 3.1 from \cite{ramirez16} can be used).  Indeed, these arguments prove a sharper version of the forward direction, which has the following consequence: $x^2-x+41$ being prime for just $x=1,2,3,4$ implies that $x^2-x+41$ must continue being prime all the way to $x=40$. Certain relatives of Rabinowitsch's theorem for real quadratic orders can  be given elementary proofs in a parallel way (compare with \cite{ramirez19}).
\end{rmk}

\subsection*{Acknowledgements}
The authors are supported by the National Science Foundation (NSF) under awards DMS-2001581 (P.\,P.) and DMS-1454767/DMS-2000093 (N.\,S.). They thank Enrique Trevi\~no and the referees for helpful suggestions. In particular, they are grateful to a referee for pointing out that the argument applies for orders other than the maximal one.

\end{document}